\numberwithin{equation}{section}
\theoremstyle{plain}
\newtheorem{thm}{Theorem}[section]
\newtheorem{lemma}[thm]{Lemma}
\theoremstyle{definition}
\newtheorem{defi}[thm]{Definition}
\begin{document}

\title[A comparison principle for nonlinear parabolic equations ]
{A comparison principle for nonlinear parabolic equations with nonlocal source and gradient absorption 
}

\author[Z. Amirzhankyzy]{Zhaniya Amirzhankyzy}
\address{
  Zhaniya Amirzhankyzy:
    \endgraf
  SDU University, Kaskelen, Kazakhstan
\endgraf
    {\it E-mail address} {\rm zhaniyaamirzhankyzy@gmail.com}
  }
\author[N. Yessirkegenov]{Nurgissa Yessirkegenov}
\address{
  Nurgissa Yessirkegenov:
  \endgraf
    KIMEP University, Almaty, Kazakhstan
  \endgraf
  and
  \endgraf
        Institute of Mathematics and Mathematical Modeling, Almaty, Kazakhstan
    \endgraf
  {\it E-mail address} {\rm nurgissa.yessirkegenov@gmail.com}
  }

\thanks{This research is funded by the Committee of Science of the Ministry of Science and Higher Education of the Republic of Kazakhstan (Grant No. AP23490970)}

\subjclass[2020]{35K55, 35B09, 35B51} \keywords{comparison principle, nonlinear parabolic equation, blow-up, global existence, nonlocal source, gradient absorption.}

\begin{abstract}
  This paper investigates the initial-boundary value problem for a nonlinear parabolic equation involving the \(p\)-Laplacian operator, nonlocal source terms, gradient absorption, and various nonlinearities:
\[
\frac{\partial u}{\partial t} - \text{div}(|\nabla u|^{p-2} \nabla u ) = \alpha |u|^{k-1}u \int_\Omega |u|^s \, dx - \beta |u|^{l-1}u |\nabla u|^q + \gamma u^m + \mu |\nabla u|^r - \nu |u|^{\sigma-1}u,
\]
where \( \Omega \) is a bounded domain in \( \mathbb{R}^N \), \( N \geq 1 \), with a smooth boundary \( \partial \Omega \). The parameters satisfy \( \alpha, l, \sigma > 0 \), \( \beta, \nu \geq 0 \), \( k, m, s \geq 1 \), \( r \geq p - 1 \geq \frac{p}{2} \), and \( \gamma, \mu \in \mathbb{R} \).

We establish a comparison principle for this problem. Using this principle, we derive blow-up results as well as global-in-time boundedness of solutions. Our results extend and unify previous studies in the literature.
\end{abstract}

\maketitle

\section{Introduction}
Let us consider the following initial-boundary value problem:
  \begin{equation}
    \left\{
\begin{aligned}
& \frac{\partial u}{\partial t} - \text{div}(|\nabla u|^{p-2} \nabla u ) = \\
& \quad {\alpha} |u|^{k-1}u \int_\Omega |u|^s dx -\beta |u|^{l-1}u |\nabla u|^{q}+\gamma u^m + \mu |\nabla u|^{r} \\
& \quad - \nu |u|^{\sigma-1}u, \quad \;\;  x \in \Omega, \;t > 0, \\
& u = 0, \quad  \quad  \quad  \quad \; \;\;\;  x \in \partial \Omega ,  \, t > 0,\\
& u(x,0) = u_0 (x) , \quad x \in  \Omega, 
\end{aligned}
\right.
\label{eq_1}
 \end{equation}
where $\Omega $ is a bounded domain of $\mathbb{R}^{N},N \geq 1$ with a smooth boundary $\partial \Omega$, $\alpha$, $l$, $\sigma > 0$, $\beta$,$\nu$ $\geq 0$, $k$,$m$,$s$ $\geq$ $1$, $r$ $\geq$ $p-1$ $\geq$ $\frac{p}{2}$, $\gamma$, $\mu$ $\in$ $\mathbb{R}$. Throughout this paper we suppose that the initial data $u_0$ satisfy
\begin{equation}
    u_0 (x) \geq 0, u_0(x) \not\equiv 0, u_0 \in W^{1,\infty}(\Omega).
\end{equation}
A wide variety of nonlinear parabolic problems with integral, gradient, and absorption terms have been investigated in recent decades. Researchers have addressed different configurations of such terms depending on the parameters and structure of the equation, including the presence of nonlocal sources, degenerate diffusion, and gradient-dependent nonlinearities. Many results are known regarding existence, uniqueness, and blow-up behavior under various settings, which serve as a foundation for analyzing the general model we propose in this work.

For example, the classification of blow-up behavior for equations with nonlinear gradient terms was studied in \cite{ZL16}, while a broader qualitative analysis of nonnegative solutions with variable exponents was provided in \cite{CH18}. Classical monographs such as \cite{QS07} and papers like \cite{Sou01} also address the role of absorption, boundary effects, and multiple nonlinearities in related problems.

In particular, several special cases of problems involving nonlinear diffusion, gradient absorption, and nonlocal sources have been analyzed separately in the literature.For example, when all reaction and source terms are absent, leaving only the gradient absorption term on the right-hand side, the well-posedness and blow-up phenomena were studied by Attouchi \cite{Att12}. In this case, the model reduces to the following simpler form:
$$\frac{\partial u}{\partial t} - \text{div}(|\nabla u|^{p-2} \nabla u ) = |\nabla u|^r.$$
This formulation emphasized the effects of nonlinear diffusion and gradient absorption terms while excluding nonlocal sources and reaction terms.

Several additional works extend the theory of global existence and blow-up criteria for nonlinear parabolic problems with similar structures. The global solution in the presence of a nonlinear gradient term was studied in \cite{Zha13}, and the interplay between nonlocal terms and logarithmic nonlinearities was analyzed in \cite{YY18}. In the context of quasilinear equations with absorption and nonlinear boundary conditions, a comprehensive analysis was carried out in \cite{AMZ14}.

Furthermore, the comparison principle was later extended to a more general case when $\gamma$ $=$ $\mu$ $=$ $\nu$ $=$ $0$, where the integral, gradient absorption and nonlocal source term remained, as was discussed in \cite{CH20}. The following problem  
$$\frac{\partial u}{\partial t} - \text{div}(|\nabla u|^{p-2} \nabla u )={\alpha} |u|^{k-1}u \int_\Omega |u|^s dx -\beta |u|^{l-1}u |\nabla u|^{q}$$
was investigated by numerous researchers over the past decade (see \cite{BL08, IL12, IL14, ILS17}), particularly within the framework of viscosity solutions, under the assumption that $\alpha$ $=$ $l$ $=$ $0$ and for $p$ $>$ $1$. A significant number of these studies focused on deriving decay estimates, establishing finite propagation speed, and analyzing long-term behavior. 

More recently, further insights were obtained for gradient source-type nonlinearities and blow-up thresholds in problems similar to ours. These include the study of p-Laplacian models with gradient terms in \cite{Din20}, extinction behavior with strong absorption in \cite{FWL13}, and foundational studies like \cite{Dia01}, which provide qualitative frameworks for nonlinear parabolic dynamics.

Notable contributions to the case where $\beta = 0$ can be found in the works of \cite{LX03} and \cite{FZ14}, which examined the existence and nonexistence of global weak solutions. Many classical theories, however, no longer hold, as nonlocal models are more relevant to practical applications than local ones. As a result, these models pose additional challenges and introduce greater complexity.  

Another important case was considered by Zhang and Li \cite{ZL13}, where $\alpha$ $=$ $\beta$ $=$ $\nu$ $=$ $0$, leading to an equation that involves only local nonlinear diffusion, reaction, and gradient absorption terms:
$$\frac{\partial u}{\partial t} - \text{div}(|\nabla u|^{p-2} \nabla u ) = \gamma u^m + \mu |\nabla u|^{r}.$$
Gradient-based blow-up phenomena, especially with degenerate diffusion, were also explored in \cite{CW89}, offering valuable energy estimates and blow-up rates that influence our setting.

We also refer to \cite{RS18} and \cite{RY22} for similar investigations using comparison principles on graded Lie groups.

Thus, we observe that different cases were considered separately for the integral term, the gradient term, and the nonlocal source term, with the existence of a weak solution proven for each case. This led us to pose the following question: Does the comparison principle hold for a problem that incorporates all these terms simultaneously?

Specifically, we aim to investigate the following scenario: $$\frac{\partial u}{\partial t}- \text{div}(|\nabla u|^{p-2}\nabla u) = \alpha u^k \int_\Omega u^s dx - \beta u^l |\nabla u|^q + \gamma u^m + \mu |\nabla u|^r-\nu |u|^{\sigma -1}u.$$ It turns out that the comparison principle does indeed work for this equation.

Our goal in this note is to establish a comparison principle for the initial-boundary value problem involving a nonlinear parabolic equation with a $p$-Laplacian operator, a nonlocal source, gradient absorption, and various nonlinear terms. We extend and refine previous results, providing new insights into the interplay between diffusion, reaction, and absorption effects. Furthermore, we discuss applications of this principle for global existence, blow-up phenomena, and the qualitative behavior of weak solutions.

The structure of this paper is as follows. In Section 2, we present and prove the comparison principle for the problem (\ref{eq_1}). In Section 3, we apply this principle to analyze conditions leading to global existence or blow-up in finite time, depending on the signs and relationships among the parameters  $\alpha$, $\beta$, $\gamma$, $\mu$, $\nu$, and the exponents  $k$, $s$, $l$, $q$, $m$, $r$, $\sigma$.

\section{comparison principle for the problem (\ref{eq_1})}

Prior to defining a weak solution, we denote
    $$Q_T=\Omega \times (0,T), \partial Q_T = \{ \partial \Omega \times [0,T]\} \cup \{\overline{{\Omega}} \times \{0\}\} \  \text{and} \ |\Omega| = meas(\Omega).$$
Now, let us introduce the definition of a weak solution of problem (\ref{eq_1}).
\newtheorem{definition}{Definition}
\begin{defi}
Let $\Tilde{\alpha}$ $=$ $\max\{ p, k+s, l+q, m, r, \sigma \}$.
A nonnegative function $u(x,t)$ is called a weak sub- (super-) solution of (\ref{eq_1}) on $Q_T$ if it satisfies
\begin{enumerate}
    \item \( u \in C ([0,T) \times \overline{\Omega}) \cap L^{\Tilde{\alpha}} (0,T; W^{1,\Tilde{\alpha}}_0(\Omega))\) and \( \frac{\partial u}{\partial t} \in L^{2}(Q_T)\).
    \item For every non-negative test-function \(\phi \in C (\overline{Q_T}) \cap L^{p}(0,T; W_0^{1,p}(\Omega)),\) we have
\begin{align}
&\int_{Q_T} \left( \partial_t u \phi + |\nabla u|^{p-2} \nabla u \cdot \nabla \phi \right) \, \mathrm{d}x \, \mathrm{d}t \notag = \int_{Q_T} \Big( \alpha |u|^{k-1} u \int_\Omega |u|^{s} \, \mathrm{d}x - \beta |u|^{l-1} u |\nabla u|^{q} \notag \\
&\quad + \gamma u^{m} + \mu |\nabla u|^{r} - \nu |u|^{\sigma-1} u \Big) \phi \, \mathrm{d}x \, \mathrm{d}t.
\label{eq_2}
\end{align}
\item 
\begin{equation}
        u(x, 0) = u_0(x) \, \;\text{in} \;\, \Omega, \quad \text{and} \quad u = 0 \; \text{on} \,\; \partial \Omega \times (0, T).
\label{eq_3}
\end{equation}
\end{enumerate}
Moreover, if we replace \enquote{$=$} in (\ref{eq_2}) and (\ref{eq_3}) by \enquote{$\leq$}$($\enquote{$\geq$}$)$
 then the corresponding solution is called a sub- (super-)solution.
\label{definition_1}
\end{defi}
\begin{thm}\label{thm_comp_princ}
    Let $\alpha$, $l$, $\sigma > 0$, $\beta$, $\nu$ $\geq 0$, $r \geq p-1 \geq \frac{p}{2}$, $q \geq \frac{p}{2}$, $k,m,s\geq 1$. Suppose that $u,v \in L^{\infty}(0,T; W_0^{1,\infty}(\Omega))$ are weak sub- and sup- solutions of (\ref{eq_1}){}{} respectively. If $u$ and $v$ are locally bounded, then $ u \leq \, v$ a.e. in  $\Omega$.
    \label{Theorem 2.1}
\end{thm}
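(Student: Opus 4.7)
The plan is a Moser-type comparison argument adapted to the many nonlinear terms. I set $w := u - v$ and intend to test the difference of the two weak inequalities defining the sub- and supersolution against $\varphi = w_+ := \max(w,0)$. Because $u(\cdot,0) \leq u_0 \leq v(\cdot,0)$ and $u = 0 \leq v$ on $\partial\Omega$, one has $w_+(\cdot,0) \equiv 0$ and $w_+ \in L^p(0,T;W^{1,p}_0(\Omega))$, so it is an admissible test function. Carrying out the subtraction and using $\partial_t w \cdot w_+ = \tfrac12\partial_t(w_+^2)$ produces, for a.e.\ $t \in (0,T)$, the energy-type inequality
\[
\tfrac{1}{2}\tfrac{d}{dt}\!\int_{\Omega}\!w_+^2\,dx + \int_{\Omega}\!\bigl(|\nabla u|^{p-2}\nabla u - |\nabla v|^{p-2}\nabla v\bigr)\cdot \nabla w_+\,dx \leq \int_{\Omega}\!\bigl[F(u)-F(v)\bigr]w_+\,dx,
\]
where $F$ collects the five source/absorption terms on the right-hand side of \eqref{eq_1}.

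For the left-hand gradient term I use, since $p-1 \geq p/2$ forces $p \geq 2$, the strong monotonicity $(|a|^{p-2}a - |b|^{p-2}b)\cdot(a-b) \geq c_p(|a|+|b|)^{p-2}|a-b|^2$, producing the crucial quantity $I := c_p\int_\Omega (|\nabla u|+|\nabla v|)^{p-2}|\nabla w_+|^2\,dx$ on the left-hand side. The task then is to estimate $\int_\Omega [F(u)-F(v)]w_+\,dx$ by $C\!\int w_+^2 + \tfrac12 I$. Writing $M := \|u\|_{L^\infty(Q_T)} + \|v\|_{L^\infty(Q_T)} < \infty$, the purely polynomial summands $\gamma u^m$ and $-\nu|u|^{\sigma-1}u$ (the latter having the favorable sign on $\{w>0\}$ and being harmless to drop) yield the bound $C(M)w_+$ by the mean value theorem on $[0,M]$ and hence contribute at most $C\!\int w_+^2$. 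For the nonlocal term I split
\[
|u|^{k-1}u\!\int_\Omega\!|u|^s - |v|^{k-1}v\!\int_\Omega\!|v|^s = (|u|^{k-1}u - |v|^{k-1}v)\!\int_\Omega\!|v|^s + |u|^{k-1}u\!\int_\Omega(|u|^s-|v|^s),
\]
bound the first summand pointwise by $C(M)w_+$, and for the second use that $|u|^s - |v|^s \leq sM^{s-1}w_+$ on $\{u>v\}$ and $\leq 0$ on $\{u \leq v\}$, so that a Cauchy--Schwarz on the product $\int w_+\cdot\int w_+$ again yields $C\!\int w_+^2$.

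For the two gradient nonlinearities the structural assumptions $q \geq p/2$ and $r \geq p-1$ come in decisively. In the absorption piece I split the difference
\[
\beta|v|^{l-1}v|\nabla v|^q - \beta|u|^{l-1}u|\nabla u|^q = \beta(|v|^{l-1}v-|u|^{l-1}u)|\nabla v|^q + \beta|u|^{l-1}u(|\nabla v|^q-|\nabla u|^q),
\]
drop the first summand since it is nonpositive on $\{w>0\}$, and bound the remainder via $\bigl||a|^q-|b|^q\bigr| \leq C(|a|+|b|)^{q-1}|a-b|$ together with the weighted Young inequality
\[
C(|\nabla u|+|\nabla v|)^{q-1}|\nabla w_+|\,w_+ \leq \tfrac{\varepsilon}{2}(|\nabla u|+|\nabla v|)^{p-2}|\nabla w_+|^2 + C_\varepsilon(|\nabla u|+|\nabla v|)^{2q-p}w_+^2;
\]
the first term is absorbed into $I$, and $2q - p \geq 0$ keeps the weight in the second term bounded by the $L^\infty$ estimate on the gradients. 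The term $\mu|\nabla u|^r$ is handled identically, now with $2r - p \geq p - 2 \geq 0$. Collecting everything and choosing $\varepsilon$ small gives $\frac{d}{dt}\!\int w_+^2 \leq C\!\int w_+^2$ with $\int w_+^2(0) = 0$, and Gronwall forces $w_+ \equiv 0$. The main obstacle I anticipate is precisely this last splitting: arranging the exponents so that one factor lands inside the strong $p$-Laplacian monotonicity with weight $(|\nabla u|+|\nabla v|)^{p-2}$ while the other is controlled by $L^\infty$ is exactly where the sharp hypotheses $q \geq p/2$ and $r \geq p-1$ are consumed; the remaining work is routine Lipschitz/mean-value bookkeeping on $[0,M]$.
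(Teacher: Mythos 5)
Your proposal is correct and follows essentially the same route as the paper: test with $\max(u-v,0)$, use the strong monotonicity of the $p$-Laplacian to produce a coercive gradient term, absorb the $|\nabla u|^q$ and $|\nabla u|^r$ differences into it via Young's inequality using $q\geq p/2$ and $r\geq p-1\geq p/2$, control the zeroth-order and nonlocal terms by mean-value/$L^\infty$ bounds, and conclude with Gronwall. The only (inessential) difference is the form of the monotonicity inequality: the paper uses $\bigl\langle |a|^{p-2}a-|b|^{p-2}b,\,a-b\bigr\rangle\geq \tfrac{4}{p^2}\bigl||a|^{\frac{p-2}{2}}a-|b|^{\frac{p-2}{2}}b\bigr|^2$ together with a mean-value bound on $s\mapsto s^{2q/p}$, whereas you use the equivalent weighted form $c_p(|a|+|b|)^{p-2}|a-b|^2$ with a weighted Young inequality, both of which consume the hypotheses in the same way.
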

The following algebraic lemma (see e.g. [\cite{Att12}, Lemma 2.1]) serves as the primary foundation for the proof of the comparison principle.

\begin{lemma}\label{lem_algeb_ineq}
    Let \(\tilde{\sigma} > 1\). For all \(\vec{a}, \vec{b} \in \mathbb{R}^N \), then we have
$$ \Big\langle|\vec{a}|^{\tilde{\sigma} - 2} \vec{a} - |\vec{b}|^{\tilde{\sigma} - 2} \vec{b}, \vec{a} - \vec{b} \Big \rangle \geq  \frac{4}{\tilde{\sigma}^{2}} \Big||\vec{a}|^\frac{\tilde{\sigma} -2}{2} \vec{a} - |\vec{b}|^\frac{\tilde{\sigma} -2}{2}  \vec{b} \Big|^{2}.$$ 
\end{lemma}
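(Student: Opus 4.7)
The plan is to reduce the vector inequality to a pointwise Jacobian bound via an integral representation along the segment joining $\vec{a}$ and $\vec{b}$. Define the two vector fields
$f(\vec{x}) := |\vec{x}|^{\tilde{\sigma}-2}\vec{x}$ and $g(\vec{x}) := |\vec{x}|^{(\tilde{\sigma}-2)/2}\vec{x}$,
so that the claim reads $\langle f(\vec{a}) - f(\vec{b}), \vec{a}-\vec{b}\rangle \geq \tfrac{4}{\tilde{\sigma}^2}|g(\vec{a}) - g(\vec{b})|^2$. Parametrize the segment by $\vec{w}(t) := \vec{b} + t(\vec{a}-\vec{b})$, set $\vec{h} := \vec{a}-\vec{b}$, and apply the fundamental theorem of calculus coordinate-wise to get $f(\vec{a}) - f(\vec{b}) = \int_0^1 Df(\vec{w}(t))\vec{h}\,dt$ and the analogous identity for $g$.

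Applying Cauchy--Schwarz in the $t$-variable to the $g$-identity yields $|g(\vec{a})-g(\vec{b})|^2 \leq \int_0^1 |Dg(\vec{w}(t))\vec{h}|^2\,dt$, while pairing the $f$-identity with $\vec{h}$ produces $\langle f(\vec{a})-f(\vec{b}), \vec{h}\rangle = \int_0^1 \langle Df(\vec{w}(t))\vec{h},\vec{h}\rangle\,dt$. It therefore suffices to prove the pointwise estimate $\langle Df(\vec{x})\vec{h},\vec{h}\rangle \geq \tfrac{4}{\tilde{\sigma}^2}|Dg(\vec{x})\vec{h}|^2$ for every $\vec{x}\neq\vec{0}$ and every $\vec{h} \in \mathbb{R}^N$. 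A direct differentiation gives $Df(\vec{x})\vec{h} = |\vec{x}|^{\tilde{\sigma}-2}\vec{h} + (\tilde{\sigma}-2)|\vec{x}|^{\tilde{\sigma}-4}(\vec{x}\cdot\vec{h})\vec{x}$, together with an analogous expression for $Dg$ in which $\tilde{\sigma}-2$ is replaced by $(\tilde{\sigma}-2)/2$ and the base exponent is halved. I would then decompose $\vec{h}$ into its component $\vec{h}_\parallel$ along $\vec{x}$ and its component $\vec{h}_\perp$ orthogonal to $\vec{x}$; both quadratic forms diagonalize into multiples of $|\vec{x}|^{\tilde{\sigma}-2}|\vec{h}_\parallel|^2$ and $|\vec{x}|^{\tilde{\sigma}-2}|\vec{h}_\perp|^2$, so after cancelling the common radial weight the pointwise inequality splits into two scalar coefficient comparisons, each a direct algebraic consequence of the assumption on $\tilde{\sigma}$ (the constant $4/\tilde{\sigma}^2$ emerges exactly from matching the radial coefficient).

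The main obstacle is analytic rather than algebraic: when the segment $\vec{w}([0,1])$ crosses the origin, the factor $|\vec{x}|^{\tilde{\sigma}-4}$ appearing in $Df$ can be singular, so the integral representation has to be justified with care. I would first establish the inequality for pairs $\vec{a},\vec{b}$ whose connecting segment avoids $\vec{0}$ (where $f$ and $g$ are smooth and the FTC applies verbatim), and then recover the general case by regularizing $|\vec{x}|$ as $(|\vec{x}|^2 + \varepsilon)^{1/2}$, running the same argument for the smoothed fields, and letting $\varepsilon \to 0^+$ via dominated convergence. The trivial case $\vec{a}=\vec{b}$ is settled by inspection, since both sides vanish.
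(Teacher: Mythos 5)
The paper offers no proof of this lemma at all --- it is quoted from [Att12, Lemma 2.1] --- so your integral-representation argument is the natural candidate for a self-contained proof, and it is indeed the standard route (write each field as an integral of its Jacobian along the segment, pair with $\vec h=\vec a-\vec b$, apply Cauchy--Schwarz in $t$, and reduce to a pointwise comparison of quadratic forms).

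There is, however, a genuine gap at the final step, where you assert that the two scalar coefficient comparisons are ``a direct algebraic consequence of the assumption on $\tilde{\sigma}$.'' Carrying out your own diagonalization: with $\vec h=\vec h_\parallel+\vec h_\perp$ one finds
$\langle Df(\vec x)\vec h,\vec h\rangle=|\vec x|^{\tilde{\sigma}-2}\left((\tilde{\sigma}-1)|\vec h_\parallel|^2+|\vec h_\perp|^2\right)$
and
$|Dg(\vec x)\vec h|^2=|\vec x|^{\tilde{\sigma}-2}\left(\tfrac{\tilde{\sigma}^2}{4}|\vec h_\parallel|^2+|\vec h_\perp|^2\right)$,
so the parallel comparison reads $\tilde{\sigma}-1\geq\tfrac{4}{\tilde{\sigma}^2}\cdot\tfrac{\tilde{\sigma}^2}{4}=1$ and the perpendicular one reads $1\geq\tfrac{4}{\tilde{\sigma}^2}$; both require $\tilde{\sigma}\geq 2$, not merely $\tilde{\sigma}>1$. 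This is not a defect of your method: the inequality as stated is simply false for $1<\tilde{\sigma}<2$ (take $\vec b=\vec 0$, so that the claim degenerates to $|\vec a|^{\tilde{\sigma}}\geq\tfrac{4}{\tilde{\sigma}^2}|\vec a|^{\tilde{\sigma}}$). The hypothesis should be $\tilde{\sigma}\geq 2$, which is the only regime in which the paper invokes the lemma anyway, since it is applied with $\tilde{\sigma}=p$ and the standing assumption $p-1\geq\tfrac{p}{2}$ forces $p\geq 2$. Once $\tilde{\sigma}\geq 2$ is imposed your argument closes, and moreover simplifies: $\|Df(\vec x)\|\leq(\tilde{\sigma}-1)|\vec x|^{\tilde{\sigma}-2}$ and $\|Dg(\vec x)\|\leq\tfrac{\tilde{\sigma}}{2}|\vec x|^{(\tilde{\sigma}-2)/2}$ then extend continuously through the origin, both fields are $C^1$ on all of $\mathbb{R}^N$, the fundamental theorem of calculus applies verbatim on any segment, and the $\varepsilon$-regularization you propose (which would itself require rechecking the pointwise Jacobian bound for the smoothed fields) becomes unnecessary.
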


\begin{proof}[Proof of Theorem \ref{thm_comp_princ}]
Let $\phi$ $=$ $\max \{u-v, 0\}$, then $\phi(x, 0)=0$ and $\left.\phi(x, t)\right|_{x \in \partial \Omega}=0$. According to the definitions of sub- and super-solutions, by selecting \(\phi\) as the test function, it follows that for any $\tau$ $\in$ $(0,T)$, the following holds:
\begin{equation}
\begin{split}
\int_0^\tau \int_\Omega \partial_t \phi \, \phi \, dx \, dt \leq&\underbrace{ -\int_0^\tau \int_{\{\phi(.,t) > 0\}} \left[|\nabla u|^{p-2}\nabla u - |\nabla v|^{p-2}\nabla v \right] \nabla\phi \, dx \, dt }_{A_{1}}\\ 
 &\underbrace{+\alpha\int_0^\tau \int_{\{\phi(.,t) > 0\}} \left[u^k \int_\Omega u^{s} \, dx - v^{k} \int_\Omega v^{s} \, dx\right] \phi \, dx \, dt}_{B}\\
  &\underbrace{-\beta \int_0^\tau \int_{\{\phi(.,t) > 0\}} \left[u^{l} |\nabla u|^{q} - v^{l} |\nabla v|^{q}\right] \phi \, dx \, dt}_{H}\\
&\underbrace{+\gamma \int_0^\tau \int_{\{\phi(.,t) > 0\}} (u^{m} - v^{m})\phi \, dx \, dt}_{S} \\
 &\underbrace{+\mu\int_0^\tau \int_{\{\phi(.,t) > 0\}} \left[|\nabla u|^{r}-|\nabla v|^{r}\right]\phi \, dx \, dt }_{G}\\
 &\underbrace{-\nu \int_0^\tau \int_{\{\phi(.,t) > 0\}} (|u|^{\sigma - 1}u - |v|^{\sigma - 1}v)\phi \, dx \, dt}_{A_{2}}.
\end{split}
\label{eq_4}
\end{equation}

Let us now estimate the terms $A_1$,$B$,$C$,$S$,$G$, and $A_2$ that appear in the inequality (\ref{eq_4}).

By applying Lemma \ref{lem_algeb_ineq} , we obtain the following estimate for $A_1$:
\begin{equation}
    \begin{aligned}
        A_1&=-\int_0^\tau \int_{\{\phi(.,t) > 0\}} \left[|\nabla u|^{p-2}\nabla u - |\nabla v|^{p-2}\nabla v \right]\nabla \phi \, dx \, dt \\
        & \leq - \frac{4}{p^{2}} \int_0^\tau \int_{\{\phi(.,t) > 0\}} \left||\nabla u|^{\frac{p-2}{2}}\nabla u - |\nabla v|^{\frac{p-2}{2}}\nabla v \right|^{2}  \, dx \, dt.
    \end{aligned}
    \label{2.4}
\end{equation}

To evaluate the variable $B$, we transform the expression and decompose $B$ into two components, assigning the first component as $D_1$ and the second one as $D_2$.
\begin{equation}
\begin{split}
    B &= \alpha \int_0^\tau \int_{\{\phi(.,t) > 0\}}\left[u^{k} \int_\Omega u^{s} \, dx - v^{k} \int_\Omega v^{s} \, dx \right] \, \phi \, dx \, dt \\
    &=\alpha \underbrace{\int_0^\tau \int_{\{\phi(.,t) > 0\}} \left[u^{k} \phi \left(\int_\Omega u^{s} \, dx - \int_\Omega v^{s} \, dx \right)\right] \, dx \, dt}_{D_1} \\
    &+\alpha \underbrace{\int_0^\tau \int_{\{\phi(.,t) > 0\}} \left[\phi \int_\Omega v^{s} \, dx \, (u^{k} - v^{k})\right] \, dx \, dt.}_{D_2}
\end{split}
\end{equation}
Now, let us evaluate each of the components separately.
\begin{align*}
    D_1 = \int_0^\tau \int_{\{\phi(.,t) > 0\}} u^{k} \phi \left( \int_\Omega (u^{s} - v^{s}) \, dx \right) \, dx \, dt.
\end{align*}
To assess the term $D_1$, we apply the mean value theorem (MVT) to obtain

\begin{equation}
\begin{aligned}
    D_1 = &\int_0^\tau \int_{\{\phi(.,t) > 0\}} u^{k} \phi \left( \int_{u \geq v} (u^{s} - v^{s})\, dx + \int_{u < v} (u^{s} - v^{s})\, dx \right)\, dx \, dt \\
    &\leq \int_0^\tau \int_{\{\phi(.,t) > 0\}} u^{k} \phi \left( \int_{u \geq v} (u^{s} - v^{s})\, dx \right) \, dx \, dt \\
    &\overset{\text{MVT}}{=} \int_0^\tau \int_{\{\phi(.,t) > 0\}} u^{k} \phi \int_{u \geq v} s \, w^{s-1} (u - v)\, dx \, dt  \quad \text{where} \  w \in (v,u) \\
    &\leq \int_0^\tau \int_{\{\phi(.,t) > 0\}} u^{k} \phi^{2} \, s \left( \int_{u \geq v} u^{s-1}\, dx \right)\, dx \, dt \\
    &\leq s \int_0^\tau \int_{\{\phi(.,t) > 0\}} u^{k+s-1} \phi^{2}\, dx\, dt \left( \int_{u \geq v} \, dx \right) \\
    &\leq s|\Omega| \|u\|^{k+s-1}_{L^{\infty}} \int_0^\tau \int_{\{\phi(.,t) > 0\}} \phi^{2}\, dx\, dt.
\end{aligned}
\end{equation}
Next, we proceed with the evaluation of $D_2$ in a similar manner
\begin{equation}
\begin{aligned}
    D_2 =&\int_0^\tau \int_{\{\phi(.,t) > 0\}} \left[\phi \int_\Omega v^{s} \, dx (u^{k} - v^{k})\right] dx \, dt \\
    &= \int_0^\tau \int_{\{\phi(.,t) > 0\}} \left[\phi \int_{u \geq v} v^{s} \, dx (u^{k} - v^{k})\right] dx \, dt \\
    &+\int_0^\tau \int_{\{\phi(.,t) > 0\}} \left[\phi \int_{u < v} v^{s} \, dx (u^{k} - v^{k})\right] dx \, dt \\
    &\leq \int_0^\tau \int_{\{\phi(.,t) > 0\}} \left[\phi \, \max \, \{u - v\} \int_{u \geq v} v^{s} \, dx \, \frac{u^{k} - v^{k}}{u - v}\right] dx \, dt \\
    &\overset{\text{MVT}}{\leq} k \int_0^\tau \int_{\{\phi(.,t) > 0\}} \phi^{2}k c^{k-1} \left(\int_\Omega v^{s} \, dx\right) dx \, dt \quad \text{where} \  c \in (v,u) \\
    &\leq k \int_0^\tau \int_{\{\phi(.,t) > 0\}} \phi^{2} u^{k-1} \left(\int_\Omega v^{s} \, dx\right) dx \, dt \\
    &\leq k ||u||^{k+s-1}_{L^{\infty}} \, |\Omega| \int_0^\tau \int_{\{\phi(.,t) > 0\}} \phi^{2} \, dx \, dt.
\end{aligned}
\end{equation}

Then, we deduce that 
\begin{equation}
    B =\alpha D_1+ \alpha D_2 \leq \alpha C \int_0^\tau \int_{\{\phi(.,t) > 0\}}  \phi^{2} \, dx \, dt.
\label{2.8}
\end{equation}
Let us now evaluate the term $H$, by decomposing it into the following components:
\begin{equation}
\begin{aligned}
    &H = -\beta \int_0^\tau \int_{\{\phi(.,t) > 0\}} \left[u^{l} |\nabla u|^{q} - v^{l} |\nabla v|^{q}\right] \phi \, dx \, dt \\
    &= - \beta \int_0^\tau \int_{\{\phi(.,t) > 0\}} \left[u^{l} |\nabla u|^{q} - u^{l} |\nabla v|^{q}\right] \phi \, dx \, dt \\
    &- \beta \int_0^\tau \int_{\{\phi(.,t) > 0\}} \left[u^{l} |\nabla v|^{q} - v^{l} |\nabla v|^{q}\right] \phi \, dx \, dt \\
    &= - \beta \int_0^\tau \int_{\{\phi(.,t) > 0\}} \left[u^{l} (|\nabla u|^{q} - |\nabla v|^{q})\right] \phi \, dx \, dt \\
    &- \beta \int_0^\tau \int_{\{\phi(.,t) > 0\}} |\nabla v|^{q} (u^{l} - v^{l}) \phi \, dx \, dt \\
    & \leq - \beta \int_0^\tau \int_{\{\phi(.,t) > 0\}} \left[u^{l} (|\nabla u|^{q} - |\nabla v|^{q})\right] \phi \, dx \, dt, 
\end{aligned}
\label{2.12}
\end{equation}
where $\beta \geq 0$.
By using Young's inequality, we have 
\begin{equation*}
    \begin{aligned}
        H & \leq C(\epsilon)\int_0^\tau \int_{\{\phi(.,t) > 0\}} (-\beta u^{l}\phi)^{2} dx \ dt + C \epsilon \int_0^\tau \int_{\{\phi(.,t) > 0\}} (|\nabla u|^{q}-|\nabla v|^{q})^{2} dx \ dt \\
        & \leq C(\epsilon) \beta ^{2} ||u||^{2l}_{L^\infty}\int_0^\tau \int_{\{\phi(.,t) > 0\}} \phi^{2} dx \ dt + C \epsilon \int_0^\tau \int_{\{\phi(.,t) > 0\}} (|\nabla u|^{q}-|\nabla v|^{q})^{2} dx \ dt 
    \end{aligned}
\end{equation*}
 
Now let us define $h(s)= s^\frac{2q}{p}$ for $s\geq 0$. Since $q\geq p-1 \geq \frac{p}{2},$ we have $h'(s)= \frac{2q}{p}s^\frac{2q-p}{p}$. By the mean value theorem, it follows that
\begin{equation}
    \left||\nabla u|^{q}- |\nabla v|^{q} \right |^{2} \leq Ch'(\theta)^{2} \left ||\nabla u|^\frac{p}{2}-|\nabla v|^\frac{p}{2} \right |^{2},
    \label{eq_2.10}
\end{equation} 
for some $0\leq \theta \leq \text{max}(|\nabla u|^\frac{p}{2},|\nabla v|^\frac{p}{2}).$
 
A straightforward calculation shows that
 $$\left ||\nabla u|^\frac{p}{2}-|\nabla v|^\frac{p}{2} \right |^{2} \leq \left | |\nabla u|^\frac{p-2}{2}\nabla u- |\nabla v|^\frac{p-2}{2} \nabla v\right|^{2}.$$

Using the MVT we arrive at the following expression for the term $H$.
 \begin{equation}
     \begin{aligned}
         H &\leq C \epsilon \int_0^\tau \int_{\{\phi(.,t) > 0\}}\left | |\nabla u|^\frac{p-2}{2}\nabla u- |\nabla v|^\frac{p-2}{2} \nabla v\right|^{2} \, dx \, dt\\
        &+ C(\epsilon)\int_0^\tau \int_{\{\phi(.,t) > 0\}} \phi^{2} dx \, dt.
     \end{aligned}
     \label{2.11}
 \end{equation}

To estimate the following term $S$, we applied the mean value theorem (MVT).
 We have
% указать, что f'(c)=c^m = \frac{u^m - v^m}{u -v}
\begin{equation}
\begin{split}
    S=\gamma \int_0^\tau \int_{\{\phi(.,t) > 0\}} (u^{m} - v^{m}) \, \phi \, dx \, dt 
    &= \gamma \int_0^\tau \int_{\{\phi(.,t) > 0\}} \frac{u^{m} - v^{m}}{u - v} (u - v) \, \phi \, dx \, dt \\
    &\overset{\text{MVT}}{\leq}\gamma m \int_0^\tau \int_{\{\phi(.,t) > 0\}} c^{m-1} \, \phi^{2} \, dx \, dt  \quad \text{where} \ c \in (v,u)\\
    &\leq \gamma m \int_0^\tau \int_{\{\phi(.,t) > 0\}} u^{m-1} \phi^{2} \, dx \, dt \\
    &\leq \gamma m \|u\|^{m-1}_{L^{\infty}} \int_0^\tau \int_{\{\phi(.,t) > 0\}} \phi^{2} \, dx \, dt.
\end{split}
\label{2.12}
\end{equation}
To estimate the term $G$, we first use Young’s inequality, followed by the application of the same argument as in equation~\eqref{eq_2.10}, taking into account the condition for $r$ and $p$:
$$ r \geq p-1 \geq \frac{p}{2}.$$ 
\begin{equation}
    \begin{aligned}
        G& = \mu \int_0^\tau \int_{\{\phi(.,t) > 0\}} \left[|\nabla u|^{r} - |\nabla v|^{r}\right] \, \phi \, dx \, dt \\
        &\overset{\text{Young}}{\leq} \mu \left[\epsilon \int_0^\tau \int_{\{\phi(.,t) > 0\}} \left||\nabla u|^{r} -|\nabla v|^{r} \right|^{2} \, dx \, dt + 
        C(\epsilon) \int_0^\tau \int_{\{\phi(.,t) > 0\}} \phi^{2} \, dx\, dt \right] \\
        &\leq \mu \left[C \ \epsilon \int_0^\tau \int_{\{\phi(.,t) > 0\}} \left||\nabla u| ^ \frac{p-2}{2} \nabla u - |\nabla v| ^ \frac{p-2}{2} \nabla v\right|^{2} \, dx\, dt + C (\epsilon)\int_0^\tau \int_{\{\phi(.,t) > 0\}} \phi^{2} \, dx \, dt \right].
    \end{aligned}
\label{2.13}
\end{equation}
 
For the term $A_2$, using the fact that
\begin{equation}
\left\{
\begin{array}{l}
|u|^{\sigma - 1} u - |v|^{\sigma - 1} v = u^{\sigma} - v^{\sigma} > 0, \ \text{if} \ u > v > 0, \\
|u|^{\sigma - 1} u - |v|^{\sigma - 1} v = u^{\sigma} + |v|^{\sigma} > 0, \ \text{if} \ u > 0 > v, \\
|u|^{\sigma - 1} u - |v|^{\sigma - 1} v = - |u|^{\sigma} + |v|^{\sigma} > 0, \ \text{if} \ 0 > u > v,
\end{array}
\right.
\label{2.7}
\end{equation}
it follows that $A_2 \leq 0$. Therefore,
\begin{equation}
    -\nu \int_0^\tau \int_{\{\phi(.,t) > 0\}} (|u|^{\sigma - 1}u - |v|^{\sigma - 1}v)\phi \, dx \, dt \leq 0.
    \label{2.15}
\end{equation}
Combining (\ref{2.4}), (\ref{2.8}), (\ref{2.11}), (\ref{2.12}), (\ref{2.12}), (\ref{2.13}), and (\ref{2.15}), and choosing $\epsilon$ small enough, we obtain
$$\int_\Omega \phi^{2} (\tau)dx \leq \int_\Omega \phi^{2} (0) dx + C\int_0^\tau \int_{\Omega} \phi^{2}dx \, dt, $$
where $C = C(\alpha, \beta, k, s,l,q, \gamma, \mu, \epsilon,m,p,r, \nu, \sigma,||u||_{L^{\infty}},\max\{|\nabla u|^{\frac{p}{2}},|\nabla v|^{\frac{p}{2}}\})$.

Applying Gronwall's inequality we conclude that for any $t \in (0,T)$, $$\int_\Omega \phi ^{2} dx = 0.$$ It follows that $\phi = 0$ a.e. $x \in \Omega$, i.e. $u \leq v$ a.e. $(x,t) \in Q_T$.

\end{proof}
\section{some applications to comparison principle for nonlinear parabolic equations with nonlocal source and gradient absorption}
\subsection{Blowup in finite time}
\begin{thm}\label{blow_thm}
Suppose that $\gamma > 0$, $r \geq p-1 \geq \frac{p}{2}$, $m > \max \{ p-1, r, \sigma \}$, $q \geq \frac{p}{2}$, $q+l>1$, $k,s\geq 1$, $k+s > \max \{p-1, q+l, r \}$  and the initial data $u_0$ is large enough, then $L^{\infty}$  finite-time blowup occurs for the problem (\ref{eq_1}). 
\end{thm}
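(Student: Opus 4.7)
The plan is to apply the comparison principle of Theorem \ref{thm_comp_princ} with an explicit sub-solution that blows up in finite time. If I can construct a nonnegative $\underline{u}$ on $\Omega\times[0,T^*)$ that is a weak sub-solution of \eqref{eq_1} in the sense of Definition \ref{definition_1}, satisfies $\underline{u}(x,0)\le u_0(x)$, and has $\|\underline{u}(\cdot,t)\|_{L^\infty}\to\infty$ as $t\uparrow T^*$, then applying Theorem \ref{thm_comp_princ} on each subinterval $[0,T']$ with $T'<T^*$ forces $u\ge\underline{u}$ on $[0,T^*)$, so $u$ itself must blow up at or before $T^*$.

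The natural ansatz is the separable one $\underline{u}(x,t)=f(t)\psi(x)$, where $\psi\in C_c^\infty(\Omega)$ is a fixed nonnegative cutoff and $f\colon[0,T^*)\to[0,\infty)$ is an increasing function to be determined. Under this ansatz every term of \eqref{eq_1} separates into a power $f(t)^\rho$ times a fixed function of $x$: the source terms contribute $\gamma f^m\psi^m$ and $\alpha f^{k+s}\psi^k\!\int_\Omega\psi^s\,dx$; the $p$-Laplacian gives $-f^{p-1}\,\mathrm{div}(|\nabla\psi|^{p-2}\nabla\psi)$; and the remaining terms give $\mu f^r|\nabla\psi|^r$, $-\beta f^{l+q}\psi^l|\nabla\psi|^q$, and $-\nu f^\sigma\psi^\sigma$. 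The hypotheses $m>\max\{p-1,r,\sigma\}$ and $k+s>\max\{p-1,l+q,r\}$ are precisely what is needed to make the two source exponents strictly dominate every other $f$-exponent. For $f$ large the two positive sources therefore absorb the remaining terms pointwise, and the sub-solution inequality reduces to an ODE of the form
\[
f'(t)\le C_0\,f(t)^{\min(m,\,k+s)},
\]
so one may take $f$ to solve $f'=c\,f^m$ with $c>0$ and $f(0)$ as large as needed, giving explicit finite-time blow-up at $T^*=f(0)^{1-m}/(c(m-1))$. The hypothesis that $u_0$ is large enough is exactly what allows one to ensure $f(0)\psi(x)\le u_0(x)$ in $\Omega$, providing the initial ordering required for comparison.

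The main technical obstacle I expect is the pointwise verification of the sub-solution inequality near the boundary of $\mathrm{supp}\,\psi$, where $\psi(x)$ becomes small but $|\nabla\psi(x)|$ does not, so $\psi^m$ is naively dwarfed by $|\nabla\psi|^r$ and by $|\mathrm{div}(|\nabla\psi|^{p-2}\nabla\psi)|$ there. The standard remedy is to take $\psi=\eta^\theta$ for some $\eta\in C_c^\infty(\Omega)$ and an exponent $\theta\ge 1$ large, so that $|\nabla\psi|\le\theta\,\psi^{\,1-1/\theta}|\nabla\eta|$; choosing $\theta$ large enough in terms of $m,k,l,p,q,r$ then makes $\psi^m$ dominate $|\nabla\psi|^r$, $\psi^l|\nabla\psi|^q$, and $|\mathrm{div}(|\nabla\psi|^{p-2}\nabla\psi)|$ uniformly on $\Omega$ for $f$ large. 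With such a $\psi$, the pointwise sub-solution inequality holds everywhere in $Q_{T^*}$, the ODE blow-up argument for $f$ goes through, and the comparison principle of Theorem \ref{thm_comp_princ} completes the proof.
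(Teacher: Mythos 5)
Your overall strategy---construct an explicit blowing-up sub-solution, order it below $u_0$, and invoke Theorem \ref{thm_comp_princ} on subintervals $[0,T']$, $T'<T^*$---is exactly the paper's strategy. The gap is in the construction itself: a separable ansatz $\underline{u}=f(t)\psi(x)$ with $f$ increasing cannot satisfy the pointwise sub-solution inequality near the zero set of $\psi$. There the left-hand side contains $\partial_t\underline{u}=f'(t)\psi(x)$, which vanishes only to \emph{first} order in $\psi$, while the two source terms $\gamma f^m\psi^m$ and $\alpha f^{k+s}\psi^{k}\int_\Omega\psi^s\,dx$ vanish to orders $m$ and $k$ respectively; since $m>p-1\ge 1$ forces $m>1$ (and $k$ may exceed $1$), the sources are $o(\psi)$ and cannot absorb $f'\psi$ no matter how large $f$ is. The same mismatch occurs with the absorption term $\nu f^\sigma\psi^\sigma$ when $\sigma<1$, and with $\gamma f^m\psi^m$ versus $\nu f^\sigma\psi^\sigma$, which requires $f\psi$ bounded below. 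Your $\psi=\eta^\theta$ device does tame $|\nabla\psi|^r$, $\psi^l|\nabla\psi|^q$ and $\Delta_p\psi$, but it does nothing for the $f'\psi$ versus $\psi^m$ comparison, which is the same obstruction that prevents separable pointwise sub-solutions for the model case $u_t=\Delta u+u^m$ (where one instead uses Kaplan's eigenfunction/Jensen argument, not available here because of the gradient terms and the comparison framework you want to use).

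The paper circumvents this with a non-separable, self-similar sub-solution
\[
v(x,t)=(1-\delta t)^{-\tilde{k}}\,V\!\left(|x|(1-\delta t)^{-\tilde{r}}\right),\qquad V(y)=1+\tfrac{A}{\lambda}-\tfrac{y^{\lambda}}{\lambda A^{\lambda-1}},\quad \lambda=\tfrac{p}{p-1},
\]
whose support shrinks in time. Two features make this work where your ansatz fails: (i) $V$ is chosen so that $\Delta_p$ applied to the profile is an exact constant ($-N/A$), so the diffusion contributes a controlled power of $(1-\delta t)$ rather than a term singular near the edge of the support; and (ii) on the inner region $\{V\ge 1\}$ the source $\gamma v^m$ is bounded below and dominates, while on the outer annulus $A\le y\le R$ (where $V\le 1$ and the source degenerates) the \emph{time derivative itself is negative}, since $\partial_t v\propto \delta(\tilde{k}V+\tilde{r}yV')\le\delta(\tilde{k}-\tilde{r}A)<0$ for $A>\tilde{k}/\tilde{r}$, and this strictly negative term absorbs everything else. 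An increasing separable profile can never produce this sign change, so to repair your proof you would need either to switch to such a shrinking-support profile or to abandon the pointwise sub-solution route altogether.
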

\begin{proof}[Proof of Theorem \ref{blow_thm}]
We study the following unbounded function defined on $[t_0, \frac{1}{\delta})$ $\times$ $\mathbb{R}^{N}$
$$
v(x,t) = \frac{1}{(1 - \delta t)^{\tilde{k}}} V\left(\frac{|x|}{(1 - \delta t)^{\tilde{r}}}\right),
$$
where the function $V(y)$ is given by:
$$V(y)=\left(1+\frac{A}{\lambda}-\frac{y^{\lambda}}{\lambda A^{\lambda -1}} \right), \quad y\geq0, \quad \lambda = \frac{p}{p-1},$$
with $A$,$\Tilde{k}$,$\Tilde{r}$,$\delta$ $>0$, to be specified later.
We assume that for all $t \in [t_0,\frac{1}{\Tilde{ \delta}})$, the following condition holds:
$$\text{supp}(v(\cdot,t))=\overline{B(0,R(1-\delta t)^{\beta})}\subset \overline{B(0,R(1-\delta t_0)^{\beta})}\subset \Omega, $$
where $R=(A^{\lambda -1}(A+\lambda))^{\frac{1}{\lambda}}$ represents the unique root of $V(y).$ 

It is evident that $v(x,t)$ blows-up in finite time. Through straightforward calculations, the function $V(y)$ fulfills the following conditions
\begin{equation}
\begin{cases}
1 \leq V(y) \leq 1+\frac{A}{\lambda}, \quad -1 \leq V^{\prime}(y) \leq 0 & \text{for } 0 \leq y \leq A, \\
0 \leq V(y) \leq 1, \quad -\left(\frac{R}{A}\right)^{\lambda-1} \leq V^{\prime}(y) \leq -1 & \text{for } A \leq y \leq R, \\
(p-1)\left|V^{\prime}(y)\right|^{p-2} V^{\prime \prime}(y) + \frac{N-1}{y}\left|V^{\prime}(y)\right|^{p-2} V^{\prime}(y) = -\frac{N}{A} & \text{for } 0 < y < R.
\end{cases}
\end{equation}
Let us demonstrate that \( v(x, t) \) serves as a sub-solution of problem (\ref{eq_1}) by defining $ y = |x| (1 - \delta t)^{-\tilde{r}}$.

% пруф из второй статьи
We consider $P(v(x, t))$ as
$$
P(v(x,t)) = \frac{\partial v}{\partial t}-\Delta_{p}v -\alpha v^{k} \int_\Omega v^{s} dx +\beta v^{l} |\nabla v|^{q} - \gamma v^{m} - \mu|\nabla v|^{r} + \nu v^{\sigma}. 
$$
Our next step is to prove that
$$
P(v(x,t)) \leq 0.
$$
Now, let us choose positive parameters $k, \tilde{r}, A,$ and $\delta$ for our problem.

If $ p(k + s - 1) + N(p - 2) > 0 $ and $r(k + s - 1) + N(r - 1) > 0$, then we choose
$$
  \tilde{r} < \min \left\{
    \frac{k + s - l - q}{q(k + s - 1) + N(l + q - 1)},
    \frac{k + s + 1 - p}{p(k + s - 1) + N(p - 2)},
    \frac{k + s - r}{r(k + s - 1) + N(r - 1)}
  \right\}.
$$
If $ p(k + s - 1) + N(p - 2) \leq 0 $, we take
  $$
  \tilde{r} < \min \left\{
    \frac{k + s - l - q}{q(k + s - 1) + N(l + q - 1)},
    \frac{k + s - r}{r(k + s - 1) + N(r - 1)}
  \right\}.
  $$

If $ r(k + s - 1) + N(r - 1) \leq 0 $, we have
  $$
  \tilde{r} < \min \left\{
    \frac{k + s - l - q}{q(k + s - 1) + N(l + q - 1)},
    \frac{k + s + 1 - p}{p(k + s - 1) + N(p - 2)}
  \right\}.
  $$

If neither condition is positive, then we set
  $$
  \tilde{r} < \frac{k + s - l - q}{q(k + s - 1) + N(l + q - 1)},
  $$
  and define
  $$
  \tilde{k} := \frac{N\tilde{r} + 1}{k + s - 1}, \quad A > \frac{\tilde{k}}{\tilde{r}}, \quad \delta < \frac{\gamma}{\tilde{k}\left(1 + \frac{A}{\lambda}\right)}.
  $$

In the case $0\leq y \leq A$, we decompose the entire expression into two terms and evaluate each separately as follows:
 \begin{equation}
\begin{aligned}
 P(v(x,t)) = & \underbrace{\frac{\partial v}{\partial t}-\Delta_pv -\gamma v^{m} - \mu|\nabla v|^{r} + \nu v^{\sigma}}_{P_1(v(x,t))} \underbrace{-\alpha v^{k} \int_\Omega v^{s} dx +\beta v^{l} |\nabla v|^{q}}_{P_2(v(x,t))} \leq 0.
\end{aligned}
\end{equation}
We first estimate $P_1(v(x,t))$ as follows:
 $$
\begin{aligned}
P_1(v(x, t))= & \frac{\delta \left(\tilde{k} V+\tilde{r} y V^{\prime}\right)}{(1-\delta t)^{\tilde{k}+1}}-\frac{\left(\left|V^{\prime}\right|^{p-2} V^{\prime}\right)^{\prime}+\frac{N-1}{y}\left|V^{\prime}\right|^{p-2} V^{\prime}}{(1-\delta t)^{(\tilde{k}+\tilde{r})(p-1)+\tilde{r}}} \\
& -\frac{\gamma V^{m}}{(1-\delta t)^{\tilde{k} m}}-\frac{\mu\left|V^{\prime}\right|^{r}}{(1-\delta t)^{r(\tilde{k}+\tilde{r})}}+\frac{\nu V^{\sigma}}{(1-\delta t)^{\tilde{k} \sigma}}.
\end{aligned}
$$
Next, we choose $t_0=t_0(p, m, r, \sigma, \delta, N, A)$ sufficiently close to $1 / \delta$, so that
$$
\begin{aligned}
P_1(v(x, t)) \leq & \frac{1}{(1-\delta t)^{\tilde{k}+1}}\left[\delta \tilde{k}\left(1+\frac{A}{\lambda}\right)+\frac{N}{A}\left(1-\delta t\right)^{1+\tilde{k}-\tilde{r}-(\tilde{k}+\tilde{r})(p-1)}\right. \\
& -\gamma-\mu\left(1-\delta t\right)^{\tilde{k}+1-r(\tilde{k}+\tilde{r})} \\
& \left.+\nu\left(1+\frac{A}{\lambda}\right)^{\sigma}\left(1-\delta t\right)^{\tilde{k}+1-\tilde{k} \sigma}\right] \leq 0.
\end{aligned}
$$
From the expression above, we conclude that $P_1(v(x,t))$ is negative since $\delta < \gamma \tilde{k}^{-1} \left(1 + \frac{A}{\lambda}\right)^{-1}$, and all exponents in the inequality remain positive under the following conditions:

\begin{equation} \label{positive_exponents}
\begin{aligned}
&1+\tilde{k}-\tilde{r}-(\tilde{k}+\tilde{r})(p-1) >0 \quad &&\text{since} \quad \tilde{r} < \frac{k + s + 1 - p}{p(k + s - 1) + N(p - 2)}, \\
&\tilde{k}+1-r(\tilde{k}+\tilde{r})>0 \quad &&\text{since} \quad \tilde{r}<\frac{k + s - r}{r(k + s - 1) + N(r - 1)}, \\
&\tilde{k}+1-\tilde{k} \sigma>0 \quad &&\text{since} \quad m > \max \{ p-1, r, \sigma \}.
\end{aligned}
\end{equation}

Now, we proceed to show that $P_2(v(x,t)) \leq 0$. To do so, we express $P_2(v(x,t))$ as follows:
$$P_2(v(x,t)) = -\alpha v^{k}\int_\Omega v^{s} dx +\beta v^{l} |\nabla v|^{q}. $$
Hence
\begin{equation*}
\begin{aligned}
& P_2(v(x,t))=-\frac{\alpha V^{k}(y)}{(1-\delta t)^{\tilde{k}(k+s)}} \int_{B\left(0, R(1-\delta t)^{\tilde{r}} \right)} V^{s}\left(\frac{|x|}{(1-\delta t)^{\tilde{r}}}\right) d x + \beta \frac{V^{l}(y)}{(1-\delta t)^{l \tilde{k}}} \\
& \times \frac{\left|V^{\prime}(y)\right|^{q}}{(1-\delta t)^{q(\tilde{r}+\tilde{k})}}.
\end{aligned}
\end{equation*}
By setting $K=\int_{B(0, R)} V^{s}(|\zeta|) d \zeta>0$, we obtain
$$
\int_{B\left(0, R(1-\delta t)^{\tilde{r}}\right)} V^{s}\left(\frac{|x|}{(1-\delta t)^{\tilde{r}}}\right) d x = \frac{K}{(1-\delta t)^{-N \tilde{r}}} .
$$
 For $t_0<t<\frac{1}{\delta}$, assuming that $t_0$ is sufficiently close to $\frac{1}{\delta}$, we have 
\begin{equation}
    P_2(v(x,t))\leq \frac{1}{(1-\delta t)^{\tilde{k}+1}}\left[-\alpha K+\beta \left(1+\frac{A}{\lambda}\right)^{l}(1-\delta t)^{-l \tilde{k}-q(\tilde{k}+\tilde{r})+\tilde{k}+1}\right] \leq 0.
\label{2.18}
\end{equation}
Therefore, $P_2(v(x,t))$ is also negative due to $\alpha$, $K>0$ and $-l \tilde{k}-q(\tilde{k}+\tilde{r})+\tilde{k}+1>0$ since
\begin{equation} \label{beta_term_positive}
\tilde{r} < \frac{k + s - l - q}{q(k + s - 1) + N(l + q - 1)}.
\end{equation}

Since $P_1(v(x,t))$ $\leq0$ and $P_2(v(x,t))$ $\leq$ $0$, we conclude that $P(v(x,t))$ $\leq$ $0$ in the case $0$ $\leq$ $y$ $\leq$ $A$. \\
Now, we consider the next case, where $A$ $\leq$ $y$ $\leq$ $R$.
Here, instead of decomposing the expression into parts, we evaluate $P(v(x,t))$ as a whole.
\begin{equation}
\begin{aligned}
P(v(x, t)) \leq & \frac{1}{(1-\delta t)^{\tilde{k}+1}}\left[\delta(\tilde{k}-\tilde{r} A)+\frac{N}{A}\left(1-\delta t\right)^{1+\tilde{k}-\tilde{r}-(\tilde{k}+\tilde{r})(p-1)}\right. \\
&+ \left(\frac{R}{A}\right)^{q(\lambda-1)} \beta(1-\delta t)^{-l \tilde{k}-q(\tilde{k}+\tilde{r})+\tilde{k}+1} \\
& -\mu\left(\frac{R}{A}\right)^{(\lambda-1) r}\left(1-\delta t\right)^{\tilde{k}+1-r(\tilde{k}+\tilde{r})} \\
& \left.+\nu\left(1-\delta t\right)^{\tilde{k}+1-\tilde{k} \sigma}\right] \leq 0.
\end{aligned}
\label{L_pv}
\end{equation}
The first term of this expression ensures that \( P(v(x, t)) \) is non-positive when $A$ $\leq$ $y$ $\leq$ $R$, since $A$ $>$ $\frac{\tilde{k}}{\tilde{r}}$ and $\delta > 0$. Owing to conditions \eqref{positive_exponents} and \eqref{beta_term_positive}, all the exponents in inequality \eqref{L_pv} remain positive.

Due to the embedding \( W_0^{1, \infty}(\Omega) \hookrightarrow C(\bar{\Omega}) \), it follows that there exists a constant \( C > 0 \) such that \( u_0(x) \geq C \) in a neighborhood \( B(0, \rho) \subset \subset \Omega \) for some \( \rho > 0 \). Since \( t_0 \) is chosen sufficiently close to \( \frac{1}{\delta} \), we can assume that $\overline{B\left(0, R\left(1-\delta t_0\right)^{\tilde{r}}\right)} \subset B(0, \rho)$. Additionally, we select \( C > 0 \) so that  
\[
u_0(x) \geq C \geq \frac{V(0)}{\left(1 - \delta t_0\right)^{\tilde{k}}} \geq v(x, t_0).
\]

Moreover, it is evident that \( v \leq  0 \) on \( \partial \Omega \times (t_0, \frac{1}{\delta}) \).  

Applying the comparison principle, we conclude that  
\[
u(x, t) \geq v(x, t + t_0), \quad x \in \Omega, \quad 0 < t < \frac{1}{\delta} - t_0.
\]

As a result, we obtain the upper bound \( T_{\max} \leq \frac{1}{\delta} - t_0 \).  
\end{proof}

\subsection{Global existence of the solution}
\begin{thm}\label{thm_glob_exis_1}
    Let $k$, $s$ $\geq$ 1 and $q$ $\geq \frac{p}{2}$, $\sigma$ $>m$. We assume that one of the following conditions hold: 
    \begin{enumerate} 
\item $q+l$ $>\max$ $\left\{p-1, k+s\right\}$;
\item $q+l$ $=k+s$ $>p-1$ and $|\Omega|$ $($or $\alpha)$ is small enough; 
\item $q+l$ $=p-1$ $>k+s$ and $\beta$ is large enough.  
\end{enumerate} 
    Then, the weak solution of the problem $(\ref{eq_1})$ with $\mu$ $=$ $0$ is uniformly bounded in the $L^\infty$ norm. 
\end{thm}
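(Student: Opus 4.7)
\textbf{Proof plan for Theorem \ref{thm_glob_exis_1}.}
My plan is to establish global $L^{\infty}$-boundedness by constructing an explicit stationary supersolution $V(x)$ of (\ref{eq_1}) with $\mu=0$ and then invoking the comparison principle (Theorem \ref{thm_comp_princ}). Concretely, I would look for $V\in W^{1,\infty}(\Omega)$, independent of $t$, satisfying
\begin{equation*}
-\Delta_p V + \beta V^l|\nabla V|^q + \nu V^\sigma \;\ge\; \alpha V^k\!\int_\Omega V^s\,dx + \gamma V^m \quad \text{in } \Omega,
\end{equation*}
together with $V(x)\ge u_0(x)$ in $\Omega$ and $V\ge 0=u$ on $\partial\Omega\times(0,T)$. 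Once such a $V$ is available, Theorem \ref{thm_comp_princ} applied with the supersolution $v(x,t):=V(x)$ gives $u(x,t)\le V(x)\le \|V\|_{L^\infty}$ uniformly in $t$, which is exactly the desired uniform bound.

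The natural ansatz is $V(x)=M\psi(x)$, with $M>0$ a large parameter and $\psi$ a fixed smooth function chosen so that $\psi$, $|\nabla\psi|$ and $-\Delta_p\psi$ are all bounded above and below by strictly positive constants on $\overline\Omega$. A convenient choice is $\psi(x)=R^{2}-|x-x_0|^{2}$ with $x_0\notin\overline\Omega$ and $R$ large enough that $\psi>0$ on $\overline\Omega$; a direct computation gives $-\Delta_p\psi=2^{p-1}(N+p-2)|x-x_0|^{p-2}>0$ on $\overline\Omega$ because $p\ge 2$. Substituting $V=M\psi$ reduces the inequality to a pointwise comparison of the form
\begin{equation*}
c_1 M^{p-1}+\beta c_2 M^{l+q}+\nu c_3 M^{\sigma}\;\ge\; \alpha c_4 M^{k+s}+\gamma c_5 M^{m},
\end{equation*}
with positive constants $c_i$ depending only on $\psi$, $\Omega$, and the exponents; in particular $c_4=\psi(x)^{k}\int_\Omega \psi^{s}\,dx$ is controlled by $|\Omega|$. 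The assumption $\sigma>m$ lets $\nu c_3 M^{\sigma}$ absorb $\gamma c_5 M^{m}$ for large $M$, so the real issue is the balance between the $\beta$-term, the $p$-Laplacian, and the nonlocal source.

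The three cases of the theorem correspond exactly to the three ways of achieving this balance. In case (1), $l+q>\max\{p-1,k+s\}$ makes $\beta c_2 M^{l+q}$ the dominant term on the left, beating $\alpha c_4 M^{k+s}$ for all $M$ sufficiently large. In case (2), $l+q=k+s>p-1$ forces the two leading powers of $M$ to coincide, and the pointwise inequality reduces to the coefficient condition $\beta c_2 \ge \alpha c_4$; since $c_4$ is proportional to $\int_\Omega \psi^{s}$, this is secured by taking either $|\Omega|$ or $\alpha$ sufficiently small. In case (3), $l+q=p-1>k+s$, so both the diffusion and the $\beta$-term contribute to the leading $M^{p-1}$ level while the source sits at the strictly lower order $M^{k+s}$; a sufficiently large $\beta$ then dominates $\alpha c_4$ and closes the estimate. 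In every case, since $u_0\in W^{1,\infty}(\Omega)$ is bounded, one further increases $M$ if necessary so that $M\psi(x)\ge u_0(x)$ in $\Omega$.

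The main obstacle I expect is reconciling this strategy with the fact that Theorem \ref{thm_comp_princ} is stated for sub/super-solutions in $L^{\infty}(0,T;W_0^{1,\infty}(\Omega))$, i.e.\ vanishing on $\partial\Omega$, while the simple ansatz $V=M\psi$ above is strictly positive on $\overline\Omega$. I would deal with this either by reinspecting the proof of Theorem \ref{thm_comp_princ} and observing that only $V\ge u$ on the parabolic boundary is actually used in the argument (which holds trivially since $u=0$ there), or, if a zero boundary trace is unavoidable, by replacing $\psi$ with a suitably rescaled $p$-Laplacian torsion function so that $\psi$ vanishes on $\partial\Omega$ while $|\nabla\psi|$ stays away from zero near $\partial\Omega$; a separate boundary-layer computation then shows that the supersolution inequality survives down to $\partial\Omega$ in each of the three regimes.
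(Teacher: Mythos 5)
Your proposal follows essentially the same route as the paper: an explicit time-independent positive supersolution combined with Theorem \ref{Theorem 2.1}, reduced to the same three-case balancing of powers of a large parameter (with $|\Omega|$ or $\alpha$ small in case (2) and $\beta$ large in case (3)); the paper's barrier is $L e^{|x-x_a|}$ with $x_a\notin\overline\Omega$ rather than your concave paraboloid, the only substantive effect being that the paper's $-\Delta_p v$ is a loss term that must be absorbed by the $\beta$-term, whereas yours is a gain term (which, if anything, makes case (3) easier). Your concern about the $W_0^{1,\infty}$ requirement is legitimate but applies equally to the paper's own barrier, which is likewise strictly positive on $\partial\Omega$ and is accepted because Definition \ref{definition_1} only asks $v\ge 0$ there and the proof of Theorem \ref{Theorem 2.1} only uses $\phi=\max\{u-v,0\}=0$ on the parabolic boundary.
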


\begin{proof}[Proof of Theorem \ref{thm_glob_exis_1}] %Paraphrase
Let $\rho(\Omega)$ denote the diameter of $\Omega$. Since $\Omega$ is bounded, it follows that $\rho(\Omega)$ $<\infty$. Consider $\epsilon$ $\in$ $(0,1)$ such that there exists a ball of radius $\epsilon$ contained within $B(\cdot, \rho(\Omega)+1) \cap \Omega^c$. For any $a$ $\in$ $\Omega$, we choose a point $x_a$ satisfying the following condition
\begin{equation*}
    B(x_a, \epsilon) \subseteq B(x_a, \rho(\Omega)+1) \cup \Omega^c, \quad |x_a-a| < \rho(\Omega)+1.
\end{equation*}
We define the following function
\begin{equation*}
    v(x,t) = Le^{\tilde{\rho}} \quad \text{with} \ \tilde{\rho} = |x-x_a|, \ x \in \Omega.
\end{equation*}

Here, $L$ $\geq$ $1$ is a constant that will be determined later. Clearly, the inequality $\epsilon $ $\leq$ $\tilde{\rho}$  $\leq$  $\rho(\Omega) + 1$ holds. We now proceed to demonstrate that $v(x,t)$ serves as a super-solution to problem $(\ref{eq_1})$. By performing a direct computation, we obtain  
\[
\mathscr{L}_p v := v_t - \Delta_p v - \alpha v^k \int_\Omega v^s \, dx + \beta v^l |\nabla v|^q - \gamma v^m + \nu v^\sigma.
\]
Then \( V(x,t) \) satisfies  
\begin{equation*}
\begin{aligned}
    \mathscr{L}_{p}V = & - L^{p-1}e^{(p-1)\tilde{\rho}} \left(\frac{N-1}{\epsilon} + (p-1)\right) - \alpha L^{k} e^{k\tilde{\rho}} \int_\Omega L^{s} e^{s\tilde{\rho}} \, dx \\
    & + \beta L^{l+q} e^{\tilde{\rho}(l+q)}- \gamma L^{m} e^{m\tilde{\rho}} + \nu L^{\sigma} e^{\sigma \tilde{\rho}}.
\end{aligned}
\end{equation*}
To ensure that $\mathscr{L}_{p}V$ $\geq$ $0$, it is necessary to select an appropriate $L$ that satisfies 
\begin{equation*}
    \beta L^{l+q}e^{\tilde{\rho}(l+q)}+\nu L^{\sigma} e^{\sigma \tilde{\rho}}-\gamma L^{m} e^{m\tilde{\rho}} \geq \alpha L^{k} e^{k\tilde{\rho}} \int_\Omega L^{s} e^{s\tilde{\rho}} dx + L^{p-1}e^{(p-1)\tilde{\rho}}(\frac{N-1}{\epsilon} + p-1).
\end{equation*}
Next, we decompose this inequality into a system of two distinct inequalities.
\begin{equation}\label{syst_thm2.2}
\left\{
\begin{aligned}
    &\beta L^{l+q} e^ {\tilde{\rho} (l+q)} + \nu L^{\sigma} e^{\sigma \tilde{\rho}} - \gamma L^{m} e^{m \tilde{\rho}}  \geq 2\alpha L^{k} e^{k\tilde{\rho}} \int_\Omega L^{s} e^{s\tilde{\rho}}dx  \\
    &\beta L^{l+q} e^{\tilde{\rho}(l+q)}  + \nu L^{\sigma} e^{\sigma \tilde{\rho}} - \gamma L^{m} e^{m \tilde{\rho}}  \geq 2 L^{p-1}e^{(p-1) \tilde{\rho}} \left(\frac{N-1}{\epsilon} + (p-1)\right). 
\end{aligned}
\right.
\end{equation}

Now, we transform the first inequality of \eqref{syst_thm2.2} into the following system of inequalities:
\begin{equation*}
\left\{
\begin{aligned}
   & \beta L^{l+q} e^{\tilde{\rho} (l+q)} \geq 2\alpha L^{k} e^{k\tilde{\rho}} \int_\Omega L^{s} e^{s\tilde{\rho}}dx \\&
    \nu L^{\sigma} e^{\sigma \tilde{\rho}} \geq \gamma L^{m} e^{m \tilde{\rho}}.    
\end{aligned}
\right.
\end{equation*}
Since
\begin{align*}
     2 \alpha L^{k} e^{k\tilde{\rho}} \int_\Omega L^{s} e^{s\tilde{\rho}}dx = 2 \alpha C L^{s} |\Omega| e^{k\tilde{\rho}},
\end{align*}
where $C=e^{s(\rho(\Omega)+1)}.$
We have  
\[
\left\{
\begin{array}{l}
    \beta L^{l+q-k-s} e^{\tilde{\rho} (l+q)} \geq 2 \alpha C  |\Omega| e^{k\tilde{\rho}}\\[10pt]
    \nu L^{\sigma - m} e^{\sigma \tilde{\rho}} \geq \gamma e^{m \tilde{\rho}}.  
\end{array}
\right.
\]

Let us now divide the second inequality of \eqref{syst_thm2.2} into the following system of inequalities:
\[
\left\{
\begin{array}{l}
\beta L^{l+q} e^{\tilde{\rho} (l+q)} \geq 2 L^{p-1}e^{(p-1) \tilde{\rho}}(\frac{N-1}{\epsilon} + (p-1) \\ 
\nu L^{\sigma} e^{\sigma \tilde{\rho}} \geq \gamma L^{m} e^{m \tilde{\rho}}.  
\end{array}
\right.
\]
Let us express the constant $L$ from the given system and obtain the following transformed system of inequalities:
\[
\left\{
\begin{array}{l}
\beta L^{l+q-p+1} e^{\tilde{\rho} (l+q)} \geq 2 e^{(p-1) \tilde{\rho}} \left( \frac{N-1}{\epsilon} + (p-1) \right) \\
\nu L^{\sigma - m} e^{\sigma \tilde{\rho}} \geq \gamma e^{m \tilde{\rho} }.
\end{array}
\right.
\] 
\begin{enumerate}
    \item If $q+l >$ max $\{p-1, k+s\}$, then the constant $L$ must satisfy
    \begin{multline*}
    L \geq \max \left\{ \left[ \frac{2}{\beta} \left( \frac{N-1}{\epsilon} + (p-1) \right) \right]^{\frac{1}{l+q-p+1}}, 
    \left[ \frac{2\alpha C |\Omega|}{\beta} \right]^{\frac{1}{l+q-k-s}}, 
    \left[ \frac{\gamma}{\nu} \right]^{\frac{1}{\sigma - m}} \right\}.
\end{multline*}
In order to guarantee that $v(x,0)\geq u_0$, we must also ensure that $L\geq ||u_0||_\infty$. Then, we select $L$ such that
\begin{align*}
     L = \max \left\{ \left[ \frac{2}{\beta} \left( \frac{N-1}{\epsilon} + (p-1) \right) \right]^{\frac{1}{l+q-p+1}}, 
    \left[ \frac{2\alpha C |\Omega|}{\beta} \right]^{\frac{1}{l+q-k-s}}, 
    \left[ \frac{\gamma}{\nu} \right]^{\frac{1}{\sigma - m}},1,||u_0||\infty \right\}.
\end{align*}

\item If $q+l = k+s > p-1$, we need to take $L$ 
\begin{align*}
    L = \max \left\{ \left[ \frac{2}{\beta} \left( \frac{N-1}{\epsilon} + (p-1) \right) \right]^{\frac{1}{l+q-p+1}}, \left[ \frac{\gamma}{\nu} \right]^{\frac{1}{\sigma - m}}, 1, ||u_0||_\infty \right\},
\end{align*}  
And we select the Lebesgue measure of $\Omega$ (denoted as $|\Omega|$) to satisfy the following condition
\[
|\Omega| \leq \frac{\beta}{2\alpha C} \quad \text{or} \quad \alpha \leq \frac{\beta}{2C|\Omega|}
\]
\item If $q+l = p-1 > k+s$, it is necessary to choose
\[
L = \max \left\{ \left[ \frac{2\alpha C |\Omega|}{\beta} \right]^{\frac{1}{l+q-k-s}}, \left[ \frac{\gamma}{\nu} \right]^{\frac{1}{\sigma - m}}, 1, ||u_0||_\infty \right\},
\] 
and we determine the measure of $\beta$ such that 
\[
\beta \geq 2 \left( \frac{N-1}{\epsilon} + (p-1) \right).
\]
\end{enumerate}
It is evident that $v(x,t)\geq u(x,t)=0$ on $\partial \Omega$. 
Therefore, by applying the comparison principle, we deduce that $v(x,t)$ serves as a super-solution to problem (\ref{eq_1}). As a result, the following inequality holds:
$$0\leq u(x,t)\leq v(x,t)\leq Le^{\rho(\Omega)+1}.$$
\end{proof}

\end{document}